\numberwithin{equation}{section}
\newtheorem{thm}{Theorem}[section]
\newtheorem{lem}[thm]{Lemma}
\newtheorem{conj}[thm]{Conjecture}
\newcommand{\del}{\backslash}
\newcommand{\cl}{\hbox{\rm cl}}
\title[Intertwining connectivity in matroids]
{Intertwining connectivity in matroids}
\date{\today}
\author[Chen]{Rong Chen}
\address{Center for Discrete Mathematics, Fuzhou University,
Fuzhou, P. R. China} 
\author[Whittle]{Geoff Whittle}
\address{School of Mathematics, Statistics and Operations Research,
Victoria University of Wellington, New Zealand}
\thanks{This research was supported by a grant from the Marsden Fund of
New Zealand, and grants from China with number CNNSF (No.11201076), SRFDP (No.20113514120010), CSC and JA11032}
\begin{document}

\begin{abstract}
Let $M$ be a matroid and let $Q$, $R$, $S$ and $T$ be subsets of the ground
set such that the smallest separation that separates $Q$ from $R$ has order
$k$ and the smallest separation that separates $S$ from $T$ has order $l$.
We prove that if $E(M)-(Q\cup R\cup S\cup T)$ is sufficiently large,
then there is an element $e$ of $M$ such that, in one of $M\backslash e$
or $M/e$, both connectivities are preserved.
\end{abstract}

{\it Key Words:} matroids, connectivity, interwining connectivity.



\maketitle

\section{Introduction}
Let $M$ be a matroid with ground set $E(M)$.  For any $X\subseteq E(M)$, define $\lambda_M(X):=r_M(X)+r_M(E(M)-X)-r(M)$. For disjoint subsets $Q,R$ of $E(M)$, the {\sl connectivity between $Q$ and $R$} is \[\kappa_M(Q,R):=\text{min}\{\lambda_M(X): Q\subseteq X\subseteq E(M)-R\}.\]

In the paper, we prove
\begin{thm}\label{main-thm}
There is a function $c: \Bbb{N}^2\rightarrow \Bbb{N}$ with the following property. Let $M$ be a matroid, and $Q,R,S,T,F\subseteq E(M)$ sets of elements such that $Q\cap R=S\cap T=\emptyset$ and $F=E(M)-(Q\cup R\cup S\cup T)$. Let $k:=\kappa_M(Q,R)$ and $\ell:=\kappa(S,T)$.  If $|F|\geq c(k,\ell)$, then there is an element $e\in F$ such that one of the following holds:
\begin{itemize}
    \item[(i)] $\kappa_{M\del e}(Q,R)=k$ and $\kappa_{M\del e}(S,T)=\ell$;
    \item[(ii)] $\kappa_{M/ e}(Q,R)=k$ and $\kappa_{M/ e}(S,T)=\ell$.
\end{itemize}
\end{thm}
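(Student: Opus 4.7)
I would argue by contradiction: assume $|F|$ is arbitrarily large yet no element $e\in F$ satisfies (i) or (ii). A standard rank calculation yields, for every $e\in F$ that is not a loop or coloop of $M$, the equivalences
\begin{align*}
\kappa_{M\del e}(Q,R)<k &\iff \exists\,X\in\mathcal{T}_{QR}:\ e\in\cl^*_M(X)\setminus X,\\
\kappa_{M/e}(Q,R)<k &\iff \exists\,X\in\mathcal{T}_{QR}:\ e\in\cl_M(X)\setminus X,
\end{align*}
where $\mathcal{T}_{QR}:=\{X\colon Q\subseteq X\subseteq E(M)-R,\ \lambda_M(X)=k\}$ is the set of minimum $QR$-separations, and analogously for $(S,T)$ and $\ell$. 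Every failure of the conclusion is thereby witnessed by a minimum separation paired with a closure/coclosure condition on $e$.

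For each bad $e$, neither $M\del e$ nor $M/e$ preserves both $\kappa(Q,R)$ and $\kappa(S,T)$, so at least one of the two connectivities drops in each case, giving one of four failure patterns per element. Pigeonhole provides $F'\subseteq F$ with $|F'|\geq|F|/4$ all sharing a single pattern; by matroid duality and the symmetry between $(Q,R)$ and $(S,T)$, I may assume the pattern is ``$\kappa_{M\del e}(Q,R)<k$ and $\kappa_{M/e}(S,T)<\ell$''. Each $e\in F'$ then selects witnesses $X_e\in\mathcal{T}_{QR}$ and $Y_e\in\mathcal{T}_{ST}$ with $e\in\cl^*_M(X_e)\setminus X_e$ and $e\in\cl_M(Y_e)\setminus Y_e$.

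Submodularity of $\lambda_M$ makes $\mathcal{T}_{QR}$ closed under $\cap$ and $\cup$, with a unique minimum $X^-_{QR}$ and maximum $X^+_{QR}$, and the same for $\mathcal{T}_{ST}$. By monotonicity of $\cl^*_M$, each $X_e$ may be uncrossed to the maximum element of $\mathcal{T}_{QR}$ avoiding $e$, and similarly each $Y_e$ to the minimum element of $\mathcal{T}_{ST}$ whose closure contains $e$. A second pigeonhole on the position of $e$ relative to both the $QR$- and $ST$-lattices leaves a still-large $F''\subseteq F$ whose elements all occupy a single common piece of the refinement and satisfy the closure conditions with respect to one fixed pair of canonical minimum separations.

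The crux is converting this uniform alignment into a contradiction with $\kappa_M(Q,R)=k$ or $\kappa_M(S,T)=\ell$: intuitively, a large enough subset of $F''$ should be movable across both separations simultaneously, producing a separation of smaller order for $(Q,R)$ or $(S,T)$. The main obstacle is that rank alone does not bound $|F''|$ in $k,\ell$ (the ambient rank of $M$ is unbounded), so one cannot close by a direct count of elements in a coclosure or closure. The argument must instead exploit the joint structure of the $QR$- and $ST$-min-separation lattices, iteratively peeling elements of $F''$ across one side while tracking the deformation of both lattices. The resulting bound $c(k,\ell)$ should emerge from the depth of this iterative uncrossing and grow roughly exponentially in $k+\ell$.
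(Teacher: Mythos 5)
Your proposal is not a proof; it is an honest sketch that stops precisely at the point where the real work begins, and you say so yourself. Let me be concrete about what is missing and how the paper fills the gap.

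Your setup is reasonable: failures are witnessed by closure/coclosure conditions relative to minimum separations, and you can pigeonhole to a fixed "failure pattern." (Incidentally, your count of four patterns over-counts: by Tutte's theorem on $\kappa$, the paper's Lemma~\ref{Tutte's th}, every $e\in F$ is deletable or contractible with respect to $(Q,R)$, and likewise for $(S,T)$, so a bad $e$ must be deletable for one pair and contractible for the other; only two patterns survive.) The genuine gap is that you have no mechanism to turn a large uniform family $F''$ into a contradiction, and you correctly identify why uncrossing to the canonical extreme members of $\mathcal{T}_{QR}$ and $\mathcal{T}_{ST}$ is not enough: the ambient rank is unbounded, so a single fixed separation cannot absorb arbitrarily many elements in its closure or coclosure.

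The paper's route supplies exactly the two ingredients you lack. First, it does not work with the lattice of all minimum separations but with a \emph{nested chain} of $Q$-$R$-separating sets of order $k+1$, one per non-flexible element, with the property that $f_i$ lies in the closure (or coclosure) of both sides of its own separation (Theorem~\ref{non-flexible}, due to Huynh and van Zwam). Second, it reduces $S$ and $T$ to sets $S_1,T_1$ of size exactly $\ell$ with $\kappa(S_1,T_1)=\ell$ (Lemma~\ref{S_1}, from Geelen--Gerards--Whittle). This second step is what defeats the unbounded-rank obstruction: with $|S_1|=|T_1|=\ell$ there is a window of $2^{2k+1}$ consecutive links in the $(Q,R)$-chain whose two ends already separate $Q\cup R\cup S_1\cup T_1$, so the $(S_1,T_1)$-structure can be localized entirely inside this short window. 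Theorem~\ref{keep-flexible} (a non-flexibility inheritance lemma you would also need) then shows the elements in the window remain non-flexible after replacing $(Q,R)$ by the two ends of the window, and one runs the Huynh--van Zwam chain construction again for $(S_1,T_1)$ inside the window. The actual contradiction is obtained by an explicit \emph{potential argument}: the extreme element $g_1$ of the $(S_1,T_1)$-chain lies in the closure of $Q'\cup R'$ but in the closure of neither $Q'$ nor $R'$ (because it is non-deletable for $(Q,R)$), which forces $\sqcap_M(Q'\cup\{g_1\},R')=\sqcap_M(Q',R')+1$. Halving the window and iterating $2k$ times drives $\sqcap$ (or its dual) above $k$, contradicting $\lambda=k$. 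Your phrase "iteratively peeling elements across one side while tracking the deformation of both lattices" gestures at this but does not identify either the $\sqcap$-increment step or the reduction of $S,T$ to size $\ell$, both of which are essential; without them there is no bound $c(k,\ell)$.

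So: the direction is compatible with the paper's, but the proposal as written has a genuine hole where the proof should be, and filling it requires two specific prior results (the Huynh--van Zwam chain theorem and the Geelen--Gerards--Whittle reduction) together with the $\sqcap$-increment mechanism, none of which appear in your sketch.
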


This theorem resolves a conjecture of Geelen (private communication). It strengthens a
theorem of Huynh and van Zwam \cite{HZ} who prove the result for a class that includes
all representable matroids but does not include all matroids.

The value that we give for $c(k,\ell)$ is unlikely to be tight. The
$(k+1)\times (\ell+1)$ grid gives an example where the theorem fails with $|F|=2kl-l-k$.
Perhaps this example is extremal?

\begin{conj}
Theorem~\ref{main-thm} holds with $|F|=2kl-l-k+1$.
\end{conj}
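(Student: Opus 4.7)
The plan is to prove the sharp bound directly, using the $(k+1)\times(\ell+1)$-grid as a template. In that example the $k(\ell-1)$ interior horizontal edges are precisely the elements for which deletion preserves $\kappa(Q,R)$ while contraction preserves $\kappa(S,T)$, and the $\ell(k-1)$ interior vertical edges display the opposite behaviour; together they saturate $|F|=2k\ell-k-\ell$. The strategy is to show that whenever every element of $F$ fails both conclusions of Theorem~\ref{main-thm}, $F$ decomposes into two analogous types of sizes at most $k(\ell-1)$ and $\ell(k-1)$, contradicting the assumption $|F|\ge 2k\ell-k-\ell+1$.

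For $e\in F$ let $\alpha(e),\beta(e),\gamma(e),\delta(e)\in\{0,1\}$ indicate whether $\kappa_{M\del e}(Q,R)=k$, $\kappa_{M/e}(Q,R)=k$, $\kappa_{M\del e}(S,T)=\ell$, and $\kappa_{M/e}(S,T)=\ell$, respectively. The conclusion at $e$ is $\alpha\gamma=1$ or $\beta\delta=1$, and we suppose this fails throughout $F$. A preliminary step disposes of the ``doubly-bad'' elements, namely those with $\alpha=\beta=0$ or $\gamma=\delta=0$: for matroid connectivity such elements are confined to a bounded essential set controlled by $k$ or $\ell$ (elements that lie on every tight separation of the given pair in both senses), and they can be absorbed without affecting the leading term. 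The remaining elements split into Type H ($\alpha=\delta=1$, i.e.\ deletion preserves $(Q,R)$ and contraction preserves $(S,T)$) and Type V ($\beta=\gamma=1$, symmetric).

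The heart of the argument is the pair of inequalities $|{\rm H}|\le k(\ell-1)$ and $|{\rm V}|\le\ell(k-1)$, to be proved by a two-dimensional canonical decomposition. Each Type~H element $e$ carries two structural witnesses: because contraction drops $\kappa(Q,R)$, there is a tight $(Q,R)$-separation $X$ with $e\in X$ and $e$ in the $M$-closure of $E-X$; because deletion drops $\kappa(S,T)$, there is a tight $(S,T)$-separation $Y$ with $e\in E-Y$ and $e$ a coloop of $M|_{E-Y}$. Uncrossing the families of tight separations for each pair (using submodularity of $\lambda_M$) produces canonical nested chains, which are refined by intersection into ``cells''. The witnesses confine $e$ to a single cell containing at most one Type~H element, and a dimension count gives at most $k(\ell-1)$ admissible cells: the subtracted $1$ reflects one extreme layer of the canonical chain whose membership is forbidden to Type~H by the shape of the witnesses. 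Type~V is handled symmetrically, and summing gives $|F|\le k(\ell-1)+\ell(k-1)=2k\ell-k-\ell$.

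The principal obstacle will be executing the two-dimensional canonical decomposition. Canonical chains exist for each connectivity pair individually, but constructing a common refinement in which both families remain tight requires iterated uncrossing and a compatibility argument; one must also verify that each refined cell carries at most one bad element, which is where the grid geometry enters in a matroid-theoretic form. A secondary difficulty is the bounded-size analysis for doubly-bad elements: the bound there must be strictly smaller than the slack between $|F|$ and $2k\ell-k-\ell$, or the sharp count is lost.
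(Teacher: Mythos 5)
What you are attempting is the paper's open conjecture: the authors prove Theorem~\ref{main-thm} only with the far weaker bound $c(k,\ell)=(2\ell+1)2^{2k+1}$ and explicitly leave the sharp value $2k\ell-k-\ell+1$ as a question, so there is no proof in the paper to compare against. Your proposal is a program rather than a proof, and its load-bearing steps are precisely the open content. The initial reduction is fine (and partly vacuous): by Tutte's linking theorem (Lemma~\ref{Tutte's th}) no element can have both $\alpha(e)=\beta(e)=0$ or both $\gamma(e)=\delta(e)=0$, so your ``doubly-bad'' elements simply do not exist --- which is fortunate, because in a sharp count there is no slack into which a ``bounded essential set'' could be absorbed. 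After that, every element of $F$ is indeed of your Type H or Type V; this much is the same observation the authors make at the start of Lemma~\ref{step-1}.

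The genuine gap is the pair of inequalities $|\mathrm{H}|\le k(\ell-1)$ and $|\mathrm{V}|\le \ell(k-1)$, for which you give no argument. Uncrossing by submodularity of $\lambda_M$ produces a nested (tight) chain of $Q$--$R$ separations and, separately, one of $S$--$T$ separations --- this is essentially Theorem~\ref{non-flexible} --- but it does not produce a common refinement in which both families stay tight: if $X$ is a tight $Q$--$R$ separation and $Y$ a tight $S$--$T$ separation, submodularity applied to $X$ and $Y$ gives no control, since $X\cap Y$, $X\cup Y$ are not separations of either pair in general, and the interaction between the two connectivity functions is exactly what makes the problem hard (it is why the paper's proof resorts to an iterated $\sqcap$-counting argument with an exponential loss). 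Likewise, the claims that each ``cell'' of the refined decomposition contains at most one Type~H element and that a ``dimension count'' yields at most $k(\ell-1)$ admissible cells are asserted, not proved; nothing in the witnesses you extract (an element in the closure of one side of a tight $Q$--$R$ separation, and a coloop-type witness against a tight $S$--$T$ separation) forces distinct Type~H elements into distinct cells. As it stands the proposal reproduces the grid heuristic that motivated the conjecture but does not supply the matroid-theoretic mechanism that would turn it into a bound, so the conjecture remains open under your approach.
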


\section{Proof of Theorem \ref{main-thm}}
For any disjoint subsets $Q,R$ of the ground set of a matroid $M$, Tutte \cite{Tutte65} proved that there is a minor $N$ of $M$ with $E(N)=Q\cup R$ and such that $\kappa(Q,R)=\lambda_N(Q)$, which is a generalization of Menger's theorem to matroids. Equivalently, we have
\begin{lem}\label{Tutte's th}
Let $M$ be a matroid and $Q,R$ be disjoint subsets of $E(M)$. For any $e\in E(M)-(Q\cup R)$ either $\kappa_{M\del e}(Q,R)=\kappa_M(Q,R)$ or $\kappa_{M/ e}(Q,R)=\kappa_M(Q,R)$.
\end{lem}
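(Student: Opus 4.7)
The plan is to derive the lemma from Tutte's theorem (quoted just above) by first upgrading the latter to its standard max--min form
\[
\kappa_M(Q,R) \;=\; \max\bigl\{\lambda_N(Q) : N \text{ is a minor of } M \text{ with } E(N) = Q\cup R\bigr\}.
\]
The ``$\leq$'' direction here is exactly the existence assertion provided by Tutte's theorem. For the ``$\geq$'' direction---that $\lambda_N(Q) \leq \kappa_M(Q,R)$ for every such minor $N$---I would argue by induction on $|E(M) - (Q \cup R)|$: the base case $E(M) = Q \cup R$ is trivial, since the only minor is $M$ itself and the only separator is $Q$. For the inductive step, pick any $e \in E(M) - (Q \cup R)$, reorder the minor operations producing $N$ so that $e$ is removed first (making $N$ a minor of either $M \del e$ or $M/e$), and then apply the inductive hypothesis together with the routine monotonicity inequalities $\kappa_{M\del e}(Q,R) \leq \kappa_M(Q,R)$ and $\kappa_{M/e}(Q,R) \leq \kappa_M(Q,R)$.

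Once the max--min form is in hand, the lemma follows in a few lines. Fix $e \in E(M) - (Q \cup R)$ and apply Tutte's theorem to obtain a minor $N = M / D \del C$ attaining $\lambda_N(Q) = \kappa_M(Q,R) =: k$, where $\{C, D\}$ partitions $E(M) - (Q \cup R)$. Then $e$ lies in exactly one of $C$ or $D$; by symmetry suppose $e \in C$, so that $N$ is also a minor of $M \del e$. The max--min formula applied to $M \del e$ then gives $\kappa_{M\del e}(Q,R) \geq \lambda_N(Q) = k$, while the observation that every minor of $M \del e$ is a minor of $M$ yields the reverse inequality $\kappa_{M\del e}(Q,R) \leq \kappa_M(Q,R) = k$. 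Hence $\kappa_{M\del e}(Q,R) = k$. If instead $e \in D$, the symmetric argument gives $\kappa_{M/e}(Q,R) = k$.

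The bulk of the work therefore lies in the ``$\geq$'' direction of the max--min identity and in the auxiliary monotonicity inequalities $\kappa_{M\del e}, \kappa_{M/e} \leq \kappa_M$. These are short submodular computations: taking a minimizing separator $X^*$ for $\kappa_M(Q,R)$ and case-splitting on whether $e \in X^*$, one produces in each case a separator in $M\del e$ or $M/e$ (namely $X^*$ or $X^* \setminus \{e\}$) whose connectivity-function value is at most $\kappa_M(Q,R)$, via the identity $\lambda_M(X) - \lambda_{M\del e}(X) \in \{0,1\}$ (and its contraction analogue) and a small adjustment when $e$ is a coloop. With these facts in place, the derivation of the lemma from Tutte's theorem is essentially formal.
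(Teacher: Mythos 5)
Your proof is correct and follows exactly the route the paper intends: the paper presents this lemma as ``equivalent'' to Tutte's linking theorem without supplying the derivation, and your argument gives the standard one, upgrading Tutte's existence statement to the max--min identity $\kappa_M(Q,R)=\max\{\lambda_N(Q): E(N)=Q\cup R\}$ via minor-monotonicity of $\kappa$, then reading off the conclusion from which side of the partition $\{C,D\}$ contains $e$. The auxiliary inequalities $\kappa_{M\del e}(Q,R)\le\kappa_M(Q,R)$ and $\kappa_{M/e}(Q,R)\le\kappa_M(Q,R)$, including the coloop/loop adjustment, are handled correctly.
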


Let $M$ be a matroid and $Q,R$ be disjoint subsets of $E(M)$. Define $\sqcap_M(Q,R):=r_M(Q)+r_M(R)-r_M(Q\cup R)$. A partition $(A,B)$ of $E(M)$ is {\sl $Q-R$-separating of order $k+1$} if $Q\subseteq A$, $R\subseteq B$ and $\lambda_M(A)\leq k$. Let $e\in E(M)-(Q\cup R)$. If $\kappa_{M\del e}(Q,R)=\kappa_{M}(Q,R)$, then $e$ is {\sl deletable with respect to} $(Q,R)$; if $\kappa_{M/ e}(Q,R)=\kappa_{M}(Q,R)$, then $e$ is {\sl contractible with respect to} $(Q,R)$; and if $e$ is both deletable and contractible with respect to $(Q,R)$, then $e$ is {\sl flexible with respect to} $(Q,R)$. Lemma \ref{Tutte's th} implies that for any $e\in E(M)-(Q\cup R)$ either $e$ is deletable with respect to $(Q,R)$ or $e$ is contractible with $(Q,R)$.

\begin{thm}\label{non-flexible}(\cite{HZ}, Theorem 3.4.)
Let $M$ be a matroid and $Q,R$ be disjoint subsets of $E(M)$, let $k:=\kappa(Q,R)$, and let $F\subseteq E(M)-(Q\cup R)$ be a set of non-flexible elements. There are an ordering $(f_1,\cdots, f_n)$ of $F$ and a sequence of $(A_1,\cdots,A_n)$ of subsets of $E(M)$ such that
\begin{itemize}
    \item[(i)] $A_i$ is $Q-R$-separating of order $k+1$ for each $i\in\{1,\cdots,n\}$;
    \item[(ii)] $A_i\subseteq A_{i+1}$ for each $i\in\{1,\cdots,n\}$;
    \item[(iii)] $A_i\cap F=\{f_1,\cdots,f_i\}$  for each $i\in\{1,\cdots,n\}$;
    \item[(iv)] $f_i\in\cl(A_i-\{f_i\})\cap\cl(E(M)-A_i)$ or $f_i\in\cl^*(A_i-\{f_i\})\cap\cl^*(E(M)-A_i)$.
\end{itemize}
\end{thm}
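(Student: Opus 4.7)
The plan is to build the chain $(A_i)$ greedily by uncrossing minimum $Q$-$R$-separators. The main structural tool is the submodularity of $\lambda_M$: whenever $A, B$ are $Q$-$R$-separating of order at most $k+1$, so are $A \cap B$ and $A \cup B$, and since $\kappa_M(Q,R) = k$ forces each to have order at least $k+1$, the family of $Q$-$R$-separators of order exactly $k+1$ is closed under intersection and union, and hence forms a sublattice of $2^{E(M)}$.

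The first step is a witness lemma: for each non-flexible $f \in E(M)-(Q\cup R)$ there is a $Q$-$R$-separator of order $k+1$ containing $f$ for which $f$ satisfies (iv). For instance, if $f$ is not deletable then $\kappa_{M\backslash f}(Q,R)=k-1$; taking a minimum $Q$-$R$-separator $A_0$ of $M\backslash f$ and setting $A:=A_0\cup\{f\}$, a direct comparison of $\lambda_M(A)$, $\lambda_M(A_0)$ and $\lambda_{M\backslash f}(A_0)$ forces $f\notin\cl_M(A-f)$ and $f\notin\cl_M(E(M)-A)$, which via the identity $e\in\cl^*_M(X)\Leftrightarrow e\notin\cl_M(E(M)-X-e)$ (valid for $e\notin X$) is precisely the second alternative in (iv). The non-contractible case is dual.

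With these witnesses in hand, I would choose $f_1$ and $A_1$ by selecting, among all pairs $(f,A)$ with $f\in F$ and $A$ a witness, one minimizing $|A\cap F|$, and argue that the minimum equals $1$. If some $g\in(A\cap F)-\{f\}$ had its own witness $B$, the aim is to uncross $A$ and $B$ to produce a new witness with strictly smaller $F$-intersection, contradicting the minimality. The inductive step is analogous: given $(A_i,f_i)$ with $A_i\cap F=\{f_1,\ldots,f_i\}$, repeat the selection among minimum separators properly containing $A_i$ and meeting $F-A_i$, taking a union with $A_i$ to preserve $A_i\subseteq A_{i+1}$. The principal difficulty, I expect, is that (iv) does not obviously survive the uncrossing: its two alternatives amount to ``$f\in\cl(A-f)$ and $f\in\cl(E(M)-A)$'' versus ``$f\notin\cl(A-f)$ and $f\notin\cl(E(M)-A)$'', and each has one half that is monotone under $A\mapsto A\cup B$ and one half monotone under $A\mapsto A\cap B$, but not both. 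I plan to handle this by a case analysis on which alternative $A$ and $B$ each witness, combined with rank submodularity inside the interval $[A\cap B, A\cup B]$ of the separator lattice, to locate a separator in this interval certifying (iv) for some $F$-element and thereby producing the desired contradiction.
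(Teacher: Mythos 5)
The paper does not prove this theorem---it is quoted from Huynh and van Zwam \cite{HZ}---so there is no in-paper argument to compare against; I am judging your outline on its own. The architecture is right (the submodular lattice of $Q$-$R$-separators of order $k+1$, the witness lemma, greedy minimization of $|A\cap F|$), and the witness lemma itself checks out: if $f$ is non-deletable, a minimum $Q$-$R$-separator $A_0$ of $M\backslash f$ does satisfy $\lambda_M(A_0\cup f)=k$ and $f\notin\cl_M(A_0)\cup\cl_M(E(M)-A_0-f)$, as you say. But the step you flag as ``the principal difficulty''---that (iv) survives uncrossing---is where the proof actually lives, and you leave it as a plan rather than an argument. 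The guts-versus-coguts case split you sketch is also an unnecessary detour.

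The observation you are missing is that (iv) for a pair $(f,A)$ is equivalent to the single equality $\lambda_M(A-\{f\})=\lambda_M(A)=k$, i.e.\ $A-\{f\}$ is itself a $Q$-$R$-separator of order $k+1$; this follows from the identity $\lambda(A)-\lambda(A-f)=\bigl(r(A)-r(A-f)\bigr)-\bigl(r((E(M)-A)\cup f)-r(E(M)-A)\bigr)$, whose vanishing is exactly the guts-or-coguts dichotomy. With this reformulation uncrossing becomes a one-line submodularity computation: if $(f,A)$ and $(g,B)$ are witness pairs with $f\in B$, then $(A-f)\cup B=A\cup B$, so $\lambda\bigl((A\cap B)-f\bigr)=\lambda\bigl((A-f)\cap B\bigr)\le\lambda(A-f)+\lambda(B)-\lambda(A\cup B)=k$, hence $(f,A\cap B)$ is again a witness pair---no case analysis required. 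Likewise if $(f,A)$ and $(g,A)$ are both witness pairs for the \emph{same} $A$, then $\lambda(A-f-g)\le\lambda(A-f)+\lambda(A-g)-\lambda(A)=k$, so $(f,A-\{g\})$ is a witness pair with one fewer $F$-element; this, combined with the first fact, delivers the contradiction your minimization argument needs, both for the base step and the inductive step restricted to separators containing $A_i$. Without this reformulation your proposal, as written, has a genuine gap at its central step.
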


\begin{thm}\label{keep-flexible}(\cite{HZ}, Lemma 3.6.)
Let $M$ be a matroid and $Q,R$ be disjoint subsets of $E(M)$, let $k:=\kappa(Q,R)$, and let $(U,E(M)-U)$ be a $Q-R$-separating set of order $k+1$. If $e\in E(M)-(U\cup R)$ is non-contradictable with respect to $(Q,R)$, then $e$ is also non-contradictable with respect to $(U,R)$.
\end{thm}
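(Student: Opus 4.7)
The plan is to apply submodular uncrossing to $U$ and a witness that $e$ is non-contractible with respect to $(Q,R)$, producing a $U$-$R$ separation of $M/e$ of order strictly smaller than $k$. First note that $\kappa_M(U,R)=k$: the inclusion $Q\subseteq U$ yields $\kappa_M(U,R)\geq\kappa_M(Q,R)=k$, and the partition $(U,E(M)-U)$ itself gives $\kappa_M(U,R)\leq\lambda_M(U)=k$. So it suffices to exhibit a set $Z\subseteq E(M)-\{e\}$ with $U\subseteq Z$, $Z\cap R=\emptyset$, and $\lambda_{M/e}(Z)\leq k-1$.

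By the non-contractibility hypothesis there exists $X\subseteq E(M)-\{e\}$ with $Q\subseteq X$, $R\cap X=\emptyset$, and $\lambda_{M/e}(X)\leq k-1$. Viewed in $M$, the set $X$ is $Q$-$R$ separating, so $\lambda_M(X)\geq k$. A direct rank computation shows
\[
\lambda_{M/e}(X)=\begin{cases}\lambda_M(X)-1 & \text{if } e\in\cl_M(X),\\ \lambda_M(X) & \text{if } e\notin\cl_M(X).\end{cases}
\]
Since $\lambda_{M/e}(X)<\lambda_M(X)$, this forces $e\in\cl_M(X)$ and $\lambda_M(X)=k$.

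I would then apply submodularity to $X$ and $U$:
\[
\lambda_M(X\cup U)+\lambda_M(X\cap U)\leq \lambda_M(X)+\lambda_M(U)=2k.
\]
Both $X\cup U$ and $X\cap U$ contain $Q$ and are disjoint from $R$, so each has $\lambda_M\geq k$; hence $\lambda_M(X\cup U)=k$. Since $e\in\cl_M(X)\subseteq\cl_M(X\cup U)$, applying the same identity to $Z:=X\cup U$ yields $\lambda_{M/e}(Z)=\lambda_M(Z)-1=k-1$. Because $U\subseteq Z$ and $Z\cap R=\emptyset$, this $Z$ is the required witness.

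I do not anticipate a significant obstacle. The only subtle point is the closure-dependent identity for $\lambda_{M/e}$ in terms of $\lambda_M$; once one observes that the strict inequality $\lambda_{M/e}(X)<\lambda_M(X)$ itself forces $e\in\cl_M(X)$, the rest of the argument is a single uncrossing step together with the observation that closure is monotone in its argument.
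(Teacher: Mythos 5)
The paper states this lemma as a citation to [HZ, Lemma~3.6] and gives no proof of its own, so there is nothing internal to compare against; I can only assess your argument on its merits, and it is correct. Your structure is the standard submodular uncrossing proof: show $\kappa_M(U,R)=k$, extract a witness $X$ for non-contractibility with respect to $(Q,R)$, use the dichotomy $\lambda_{M/e}(X)=\lambda_M(X)-1$ or $\lambda_M(X)$ according to whether $e\in\cl_M(X)$ to force $e\in\cl_M(X)$ and $\lambda_M(X)=k$, then uncross $X$ with $U$ to get $\lambda_M(X\cup U)=k$, and finally use monotonicity of closure to transfer $e\in\cl_M(X\cup U)$ and conclude $\lambda_{M/e}(X\cup U)=k-1$. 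All of these steps check out, including the detail that $e\notin X\cup U$ (you have $e\notin U$ by hypothesis and $e\notin X$ since $X\subseteq E(M)-\{e\}$), so $X\cup U$ is a legitimate $U$--$R$ separating set in $M/e$.

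One small implicit point worth stating explicitly: the displayed formula for $\lambda_{M/e}(X)$ presumes $e$ is not a loop of $M$. This is harmless here, because if $e$ were a loop then $M/e=M\backslash e$ and contracting a loop does not change $\lambda$ of any set, so $\kappa_{M/e}(Q,R)=\kappa_M(Q,R)$ and $e$ would be contractible, contradicting the hypothesis. A one-line remark to this effect would make the proof airtight.
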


First we prove that Theorem \ref{main-thm} holds for the case $|S|=|T|=\ell$.

\begin{lem}\label{step-1}
There is a function $c: \Bbb{N}^2\rightarrow \Bbb{N}$ with the following property. Let $M$ be a matroid, and $Q,R,S,T,F\subseteq E(M)$ sets of elements such that $Q\cap R=S\cap T=\emptyset$ and $F=E(M)-(Q\cup R\cup S\cup T)$. Let $k:=\kappa_M(Q,R)$ and $\ell:=\kappa_M(S,T)$.  If $|S|=|T|=\ell$ and $|F|\geq c(k,\ell)$, then there is an element $e\in F$ such that one of the following holds:
\begin{itemize}
    \item[(i)] $\kappa_{M\del e}(Q,R)=k$ and $\kappa_{M\del e}(S,T)=\ell$;
    \item[(ii)] $\kappa_{M/ e}(Q,R)=k$ and $\kappa_{M/ e}(S,T)=\ell$.
\end{itemize}
\end{lem}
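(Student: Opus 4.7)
I will argue by contradiction: assume no $e\in F$ satisfies (i) or (ii), and bound $|F|$ in terms of $k$ and $\ell$. By Lemma~\ref{Tutte's th} applied to $(Q,R)$, every $e\in F$ is deletable or contractible with respect to $(Q,R)$, and likewise for $(S,T)$. If some $e\in F$ were flexible with respect to $(Q,R)$, then both $M\del e$ and $M/e$ would preserve $\kappa(Q,R)=k$, and Lemma~\ref{Tutte's th} applied to $(S,T)$ would then give one of (i), (ii); hence no $e\in F$ is flexible with respect to $(Q,R)$, and symmetrically no $e$ is flexible with respect to $(S,T)$. Among the remaining cases, being deletable-only with respect to both pairs forces (i) and being contractible-only with respect to both forces (ii), both excluded; so every $e\in F$ belongs to one of the two ``swap'' classes: (A) deletable-only with respect to $(Q,R)$ and contractible-only with respect to $(S,T)$, or (B) the opposite swap. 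Since $\lambda_M=\lambda_{M^*}$, matroid duality swaps the two classes, so we may assume $|F_A|\ge |F|/2$.

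\textbf{Two nested chains and a stable window.} I will apply Theorem~\ref{non-flexible} to $(Q,R)$ with the set $F_A$. Every element of $F_A$ is non-contractible but deletable with respect to $(Q,R)$, which rules out the coclosure case of clause~(iv); this yields an ordering $f_1,\dots,f_n$ of $F_A$ and nested $Q$--$R$-separators $A_1\subseteq\cdots\subseteq A_n$ of order $k+1$ with $f_i\in\cl(A_i-\{f_i\})\cap\cl(E(M)-A_i)$ for every $i$. Dually, Theorem~\ref{non-flexible} applied to $(S,T)$ with $F_A$ gives nested $S$--$T$-separators $B_1\subseteq\cdots\subseteq B_n$ of order $\ell+1$ satisfying the coclosure version of (iv). Because $|S\cup T|=2\ell$ and the intersections $(A_i\cap S)$, $(A_i\cap T)$ are monotone nondecreasing, they change at most $2\ell$ times along the $A$-chain, so pigeonhole yields a window $[a,b]$ of length at least $n/(2\ell+1)$ on which both intersections are constant. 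Throughout this window $A_{i+1}-A_i=\{f_{i+1}\}$, and the closure conditions force $r(A_{i+1})=r(A_i)$ and $r(E(M)-A_{i+1})=r(E(M)-A_i)$; hence $\lambda_M(A_i)$ is a constant $k'\le k$ on the window, and every new $f_{i+1}$ in the window lies in the ``guts'' $\cl(A_a)\cap\cl(E(M)-A_a)$ of the separation $(A_a,E(M)-A_a)$, a set of rank at most $k$ over $M/A_a$.

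\textbf{Main obstacle.} The hard step is turning this rigidity into a quantitative bound on $n$. The plan is to use Theorem~\ref{keep-flexible} with $U=A_a$: every $f_j\in F_A$ with $j>a$, being non-contractible with respect to $(Q,R)$, remains non-contractible with respect to the enlarged pair $(A_a,R)$; a dual application (in $M^*$, or via the coclosure chain $B_1\subseteq\cdots\subseteq B_n$) gives the analogous statement on the $(S,T)$-side. This yields a smaller instance in which $Q$ has been replaced by $A_a$, the parameters $k,\ell$ are unchanged, and the hypothesis $|S|=|T|=\ell$ persists. Iterating the reduction, the enlarged ``$Q$'' eventually exhausts the rank-bounded guts of the $k'$-separation, and I expect that this terminal configuration---combined with $|S|=|T|=\ell$---forces a parallel (or series) class inside $F_A$ whose size exceeds $\ell$, which is incompatible with $\kappa_M(S,T)=\ell$. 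The delicate point will be the quantitative matching of the two chains on the common index set $F_A$, a Ramsey-type interaction between the $(2\ell+1)$-pigeonhole on one side and the analogous reduction on the other; the explicit $c(k,\ell)$ should fall out of a single recursion on the number of iterations of Theorem~\ref{keep-flexible}.
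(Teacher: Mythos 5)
Your setup is sound as far as it goes and is close in spirit to the paper's first steps (reduce to the all-non-flexible case, apply Theorem~\ref{non-flexible} to the $(Q,R)$-side, use a $(2\ell+1)$-way pigeonhole on where $S\cup T$ falls in the nested chain to extract a long window). But the paragraph you label ``Main obstacle'' is where the proof actually lives, and there is a genuine gap there: the intended punchline that ``the terminal configuration forces a parallel (or series) class inside $F_A$ of size exceeding $\ell$'' is asserted, not argued, and I do not see why it should hold. Knowing that every $f_{i+1}$ in the window lies in the guts $\cl(A_a)\cap\cl(E(M)-A_a)$ is compatible with the elements being in general position in a rank-$k$ flat; nothing about $|S|=|T|=\ell$ by itself forces many of them to collapse into a single parallel class, and so the claimed contradiction with $\kappa_M(S,T)=\ell$ does not follow. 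Iterating Theorem~\ref{keep-flexible} to grow $Q$ also does not obviously terminate with the desired structure, and you give no bound on the number of iterations, so no explicit $c(k,\ell)$ falls out.

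The paper closes the gap by playing the two chains against each other quantitatively rather than structurally. Having fixed a window, set $Q':=A_i$ and $R':=$ the complementary side of the window so that $Q\cup R\cup S\cup T\subseteq Q'\cup R'$, and let $F'$ be the elements strictly inside. Apply Theorem~\ref{non-flexible} to $(S,T)$ with $F'$ and look at the \emph{first} element $g_1$ of that chain: if $g_1$ is deletable w.r.t.\ $(S,T)$ then $g_1\in\cl(C_1-\{g_1\})\subseteq\cl(Q'\cup R')$ (because $C_1-\{g_1\}\subseteq Q'\cup R'$), while $g_1$ being contractible w.r.t.\ $(Q,R)$ forces $g_1\notin\cl(Q')$ and $g_1\notin\cl(R')$; hence $\sqcap_M(Q'\cup\{g_1\},R')=\sqcap_M(Q',R')+1$. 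One then shrinks the window past $g_1$ on whichever side still leaves at least half of $F'$, and repeats. Each step raises $\sqcap_M$ or $\sqcap_{M^*}$ by one (depending on which of the two ``swap'' classes $g_1$ is in), and since $\sqcap_M(A,B),\sqcap_{M^*}(A,B)\le\lambda_M(A)=k$ for any window ends $A\subseteq E(M)-B$, after $2k$ iterations you reach a contradiction. This is why the window length is taken to be $2^{2k+1}$ (halving $2k$ times still leaves room), giving $c(k,\ell)=(2\ell+1)2^{2k+1}$. In short, you found the right window and the right object (the guts), but you are missing the local-connectivity counting argument that makes the window length a function of $k$; without it your proof does not go through.
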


\begin{proof}
We prove that the result holds for $c(k,\ell):=(2\ell+1)2^{2k+1}$. If $F$ contains some flexible element with respect to $(Q,R)$ or $(S,T)$, then we are done. So we may assume that each element in $F$ is non-flexible with respect to  $(Q,R)$ and non-flexible with respect to $(S,T)$. By Lemma \ref{Tutte's th} an element $e$ in $F$ is deletable (or contractible) with respect to $(Q,R)$ if and only if $e$ is contractible (or deletable) with respect to $(S,T)$, for otherwise the lemma holds.

Let $(A_1, \cdots, A_{c(k,\ell)})$ be the nested sequence of $Q-R$ separating sets from Theorem \ref{non-flexible}, let $(B_1,\cdots, B_{c(k,\ell)})$ be their complements, and let $(f_1, \cdots, f_{c(k,\ell)})$ be the corresponding ordering of $F$. Since $|S|=|T|=\ell$, there is a positive integer $i$ such that $i+2^{2k+1}\leq c(k,\ell)$ and such that $Q\cup R\cup S\cup T\subseteq A_i\cup B_{i+2^{2k+1}}$. Set
 \[\begin{aligned}
&Q^{'}:=A_i,\  R^{'}:=B_{i+2^{2k+1}},\ F^{'}:=E(M)-(Q^{'}\cup R^{'}),\\
&A_j^{'}:=A_{i+j},\ B_j^{'}:=B_{i+j},\ f_j^{'}:=f_{i+j},\ \text{for\ any}\ 1\leq j\leq 2^{2k+1}.
\end{aligned}\]
That is, $F^{'}=\{f_1^{'},\cdots,f_{2^{2k+1}}^{'}\}$. By duality and Lemma \ref{keep-flexible}, each element in $F^{'}$ is non-flexible with respect to $(Q^{'},R^{'})$.

Let $(C_1, \cdots, C_{2^{2k+1}})$ be the nested sequence of $S-T$ separating sets from Theorem \ref{non-flexible} determined by the non-flexible-element set $F^{'}$ with respect to $(S,T)$, let $(D_1,\cdots, D_{2^{2k+1}})$ be their complements, and let $(g_1, \cdots, g_{2^{2k+1}})$ be the corresponding ordering of $F^{'}$.
By duality we may assume that $g_1$ is a deletable element with respect to $(S,T)$. Then (i) $g_1\in\cl(C_1-\{g_1\})$ and (ii) $g_1$ is a contractible element with respect to $(Q,R)$. By (i)
and the fact that $C_1-\{g_1\}\subseteq Q'\cup R'$ we see that $g_1\in\cl(Q'\cup R')$.
From (ii) we deduce that $g_1\notin\cl(Q^{'})$ and $g_1\notin\cl(R^{'})$. Therefore
$\sqcap_M(Q^{'}\cup\{g_1\},R^{'})=\sqcap_M(Q^{'},R^{'})+1$. Assume that $g_1=f_j^{'}$. If $j\leq 2^{2k}$ then set $Q^{''}:=A_j^{'}, R^{''}:=R^{'}$; else if $j>2^{2k}$ then set $Q^{''}:=Q^{'}, R^{''}:=B_{j-1}^{'}$. No matter which case happens, set $F^{''}:=E(M)-(Q^{''}\cup R^{''})$. Evidently, $|F^{''}|\geq 2^{2k}$ as $|F^{'}|=2^{2k+1}$. Replacing $Q^{'},R^{'},F^{'}$ with $Q^{''},R^{''},F^{''}$ respectively and repeating the above analysis $2k$ times, there are numbers $j_1,j_2$ with $2k+1\leq j_1\leq j_2\leq 2^{2k+1}$ such that $\sqcap_M(A_{j_1}^{'},B_{j_2}^{'})\geq k+1$ or $\sqcap_{M^*}(A_{j_1}^{'},B_{j_2}^{'})\geq k+1$, a contradiction to the fact that $\lambda(A_{j_1}^{'})=k$. So the lemma holds.
\end{proof}

To prove Theorem \ref{main-thm} we still need the following lemma.

\begin{lem}\label{S_1}(\cite{GGW07}, Lemma 4.7.)
Let $M$ be a matroid and $S,T$ be disjoint subsets of $E(M)$. There exists sets $S_1\subseteq S, T_1\subseteq T$ such that $|S_1|=|T_1|=\kappa(S_1,T_1)$.
\end{lem}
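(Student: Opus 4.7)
The plan is to prove the natural strengthening that one can achieve $|S_1|=|T_1|=\kappa_M(S_1,T_1)=\kappa_M(S,T)$; the statement as displayed is immediate from this. Set $k:=\kappa_M(S,T)$ and proceed by induction on $|S|+|T|$. The inductive step rests on a reduction claim: if $|S|>k$, then some $s\in S$ satisfies $\kappa_M(S\setminus\{s\},T)=k$ (and symmetrically for $T$). Granted this, apply the inductive hypothesis to $(S\setminus\{s\},T)$, which has the same value of $k$, to produce $S_1\subseteq S\setminus\{s\}\subseteq S$ and $T_1\subseteq T$ witnessing the conclusion. The base case is $|S|=|T|=k$: then $\kappa_M(S,T)\leq r_M(S)\leq|S|=k$ forces $\kappa_M(S,T)=k$, and $S_1:=S$, $T_1:=T$ works (the case $k=0$ is handled trivially by $S_1=T_1=\emptyset$).

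To establish the reduction claim I would uncross. Suppose for contradiction every $s\in S$ has $\kappa_M(S\setminus\{s\},T)\leq k-1$, and fix, for each such $s$, a witnessing set $X_s$ with $S\setminus\{s\}\subseteq X_s\subseteq E(M)\setminus T$, $s\notin X_s$, and $\lambda_M(X_s)\leq k-1$. Submodularity of $\lambda_M$ yields
\[
\lambda_M(X_{s_1}\cap X_{s_2})+\lambda_M(X_{s_1}\cup X_{s_2})\leq\lambda_M(X_{s_1})+\lambda_M(X_{s_2})\leq 2(k-1),
\]
and for distinct $s_1,s_2$ the union $X_{s_1}\cup X_{s_2}$ contains $S$, so $\lambda_M(X_{s_1}\cup X_{s_2})\geq\kappa_M(S,T)=k$, forcing $\lambda_M(X_{s_1}\cap X_{s_2})\leq k-2$. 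Iterating this uncrossing (at each stage the running intersection still avoids $T$ and contains $S\setminus\{s_1,\ldots,s_m\}$, while adjoining a new $X_{s_{m+1}}$ produces a union containing $S$), one obtains
\[
\lambda_M\Bigl(\bigcap_{i=1}^{m}X_{s_i}\Bigr)\leq k-m
\]
for any $m$ distinct elements $s_1,\ldots,s_m\in S$. Taking $m=k+1$ (valid since $|S|\geq k+1$) forces $\lambda_M\leq-1$, contradicting the nonnegativity of $\lambda_M$.

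The main obstacle I anticipate is the bookkeeping in the iterated uncrossing: one must simultaneously track that the running intersection avoids $T$ and that the running union contains $S$ (which is precisely why the $s_i$ must be chosen distinctly) so that the inequality $\lambda_M\leq k-m$ propagates correctly. Once the reduction claim is in hand, the induction goes through routinely, as does the symmetric reduction on the $T$ side.
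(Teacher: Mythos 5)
Your proof is correct, and the minimal-choice-plus-uncrossing argument is essentially the standard one used for this lemma. You are also right that the stronger conclusion $\kappa_M(S_1,T_1)=\kappa_M(S,T)$ is what is really intended (and needed where the lemma is invoked later in the paper), even though the statement as displayed omits it.
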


For convenience we restate Theorem \ref{main-thm} here.

\begin{thm}
There is a function $c: \Bbb{N}^2\rightarrow \Bbb{N}$ with the following property. Let $M$ be a matroid, and $Q,R,S,T,F\subseteq E(M)$ sets of elements such that $Q\cap R=S\cap T=\emptyset$ and $F=E(M)-(Q\cup R\cup S\cup T)$. Let $k:=\kappa_M(Q,R)$ and $\ell:=\kappa(S,T)$.  If $|F|\geq c(k,\ell)$, then there is an element $e\in F$ such that one of the following holds:
\begin{itemize}
    \item[(i)] $\kappa_{M\del e}(Q,R)=k$ and $\kappa_{M\del e}(S,T)=\ell$;
    \item[(ii)] $\kappa_{M/ e}(Q,R)=k$ and $\kappa_{M/ e}(S,T)=\ell$.
\end{itemize}
\end{thm}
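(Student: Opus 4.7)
The plan is to reduce Theorem~\ref{main-thm} to Lemma~\ref{step-1} via Lemma~\ref{S_1}. First, I apply Lemma~\ref{S_1} to $(S,T)$ to extract $S_1\subseteq S$ and $T_1\subseteq T$ with $|S_1|=|T_1|=\kappa_M(S_1,T_1)$. Any $(S,T)$-separation is also an $(S_1,T_1)$-separation, so this common value is at most $\ell$; the proof of Lemma~\ref{S_1} in GGW07 actually delivers equality, and I use the form $|S_1|=|T_1|=\kappa_M(S_1,T_1)=\ell$.

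Next, I set $F^*:=F\cup(S-S_1)\cup(T-T_1)=E(M)-(Q\cup R\cup S_1\cup T_1)$ and apply Lemma~\ref{step-1} to the tuple $(M,Q,R,S_1,T_1,F^*)$. Choosing $c(k,\ell)$ at least the Lemma~\ref{step-1} bound $(2\ell+1)2^{2k+1}$ guarantees $|F^*|\ge(2\ell+1)2^{2k+1}$, producing an element $e\in F^*$ such that one of $M\del e$ or $M/e$ preserves both $\kappa(Q,R)=k$ and $\kappa(S_1,T_1)=\ell$.

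Provided $e\in F$, then since $S_1\subseteq S$ and $T_1\subseteq T$, the monotonicity chain
\[
\ell=\kappa_{M\del e}(S_1,T_1)\le\kappa_{M\del e}(S,T)\le\kappa_M(S,T)=\ell
\]
(and the analogous chain for $M/e$) pins $\kappa_{M\del e}(S,T)=\ell$, completing the proof.

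\emph{The main obstacle} is forcing $e\in F$ rather than $e\in (S-S_1)\cup(T-T_1)$: since $|S|$ and $|T|$ can be arbitrarily large relative to $\ell$, one cannot bound away the unwanted placements by merely inflating $c(k,\ell)$. The fix is to look inside the proof of Lemma~\ref{step-1}: its pigeonhole step only needs a window of $2^{2k+1}$ consecutive non-flexible $F$-elements whose interior in $E(M)$ avoids $S_1\cup T_1$ (a set of size at most $2\ell$), and the subsequent nested-sequence argument involves only elements of this window, which lie entirely in the original $F$. Re-running the proof of Lemma~\ref{step-1} with $(S_1,T_1)$ in place of $(S,T)$ therefore locates $e$ inside $F$, and the monotonicity chain above then yields $\kappa(S,T)=\ell$ for free.
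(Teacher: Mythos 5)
Your first two paragraphs follow the paper exactly: reduce via Lemma~\ref{S_1} to the case $|S_1|=|T_1|=\kappa(S_1,T_1)=\ell$, apply Lemma~\ref{step-1} with $(S_1,T_1)$, and use the monotonicity sandwich $\ell=\kappa_{M'}(S_1,T_1)\le\kappa_{M'}(S,T)\le\kappa_M(S,T)=\ell$ to upgrade the conclusion from $(S_1,T_1)$ to $(S,T)$. You also correctly identify the genuine obstacle: the element $e$ produced by Lemma~\ref{step-1} applied to $F^{*}=E(M)-(Q\cup R\cup S_1\cup T_1)$ may fall in $(S-S_1)\cup(T-T_1)$ rather than in $F$, and since $|S|,|T|$ are not bounded in terms of $k$ and $\ell$, this cannot be pigeonholed away by inflating $c(k,\ell)$.

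However, your proposed fix does not work. You claim that re-running the proof of Lemma~\ref{step-1} with the original $F$ gives a window $B_i\cap A_{i+2^{2k+1}}$ that ``lies entirely in the original $F$.'' That is false. When Theorem~\ref{non-flexible} is applied to $F$ with respect to $(Q,R)$, the nested $Q$--$R$-separating sets $A_1\subseteq\cdots\subseteq A_{|F|}$ control only where the elements of $F$ sit: $A_j\cap F=\{f_1,\dots,f_j\}$. The elements of $(S-S_1)\cup(T-T_1)$ are distributed arbitrarily across the slabs, and the pigeonhole only guarantees that $S_1\cup T_1$ (size $\le 2\ell$) is absorbed into $A_i\cup B_{i+2^{2k+1}}$. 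So the set $F'=E(M)-(A_i\cup B_{i+2^{2k+1}})$ generally contains elements of $(S-S_1)\cup(T-T_1)$ in addition to $f_{i+1},\dots,f_{i+2^{2k+1}}$. Those extra elements are not known to be non-flexible with respect to $(S_1,T_1)$ (nor with respect to $(Q,R)$), so the second application of Theorem~\ref{non-flexible}, to $F'$ with respect to $(S_1,T_1)$, is not licensed; and the intersection-counting at the end of the proof depends on $F'$ consisting exactly of the $2^{2k+1}$ elements $f'_1,\dots,f'_{2^{2k+1}}$. The internal surgery you propose therefore leaves a hole at the very step you are trying to repair.

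The paper resolves this without touching the proof of Lemma~\ref{step-1}. It applies Lemma~\ref{step-1} as a black box to get $e_1$, and if $e_1\in(S\cup T)-(S_1\cup T_1)$, it passes to the minor $M_1\in\{M\backslash e_1,M/e_1\}$, where both $\kappa(Q,R)=k$ and $\kappa(S_1,T_1)=\ell$ are preserved, and applies Lemma~\ref{step-1} again. Each iteration removes one element of the finite set $(S\cup T)-(S_1\cup T_1)$, so after finitely many steps the element found must lie in $F$. One then transfers back to $M$ using the fact that $\kappa$ is monotone under minors: for example if $M_i\backslash e$ preserves both connectivities then $k=\kappa_{M_i\backslash e}(Q,R)\le\kappa_{M\backslash e}(Q,R)\le\kappa_M(Q,R)=k$ and similarly for $(S_1,T_1)$, and then the same sandwich you already wrote upgrades $(S_1,T_1)$ to $(S,T)$. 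Replacing your third paragraph with this iteration closes the gap.
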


\begin{proof}
We prove that the result holds for $c(k,\ell):=(2\ell+1)2^{2k+1}$. By Lemma \ref{S_1} there are sets $S_1\subseteq S, T_1\subseteq T$ such that $|S_1|=|T_1|=\kappa_M(S_1,T_1)$. Then Lemma \ref{step-1} implies that there is an element $e_1\in E(M)-(Q\cup R\cup S_1\cup T_1)$ such that for some $M_1\in\{M\del e_1, M/e_1\}$ we have  $\kappa_{M_1}(Q,R)=k$ and $\kappa_{M_1}(S_1,T_1)=\ell$. Since $\kappa_{M_1}(S_1,T_1)=\ell$ implies $\kappa_{M_1}(S,T)=\ell$, when $e_1\in F$ the lemma holds. So we may assume that $e_1\notin F$. That is, $e_1\in (S\cup T)-(S_1\cup T_1)$.  Since $F\subseteq E(M_1)-(Q\cup R\cup S_1\cup T_1)$, using Lemma \ref{step-1} again there is an element $e_2\in E(M_1)-(Q\cup R\cup S_1\cup T_1)$ such that  for some $M_2\in\{M_1\del e_2, M_1/e_2\}$ we have  $\kappa_{M_2}(Q,R)=k$ and $\kappa_{M_2}(S_1,T_1)=\ell$. Without loss of generality we may assume that $M_2=M_1\del e_2$. Then $\kappa_{M\del e_2}(Q,R)=k$ and $\kappa_{M\del e_2}(S_1,T_1)=\ell$ as $\kappa_{M}(Q,R)=k$ and $\kappa_{M}(S_1,T_1)=\ell$. Thus, when $e_2\in F$, the lemma holds. So we may assume that $e_2\notin F$. Since $(S\cup T)-(S_1\cup T_1)$ is finite, repeating the above analysis several times we can always find a minor with an element $e$ such that (i) or (ii) holds. The theorem follows from this observation and the fact that the
connectivity function is monotone under minors.
\end{proof}

\section{Acknowledgments}

The authors thank Tony Huynh and Stefan H. M. van Zwam for reading the paper and giving some helpful comments.

\end{document}